\newcommand{\arxiv}[1]{\href{http://arxiv.org/abs/#1}{\tt arXiv:\nolinkurl{#1}}}
\newtheorem{theorem}{Theorem}[section]
\newtheorem{lemma}[theorem]{Lemma}
\newtheorem{proposition}[theorem]{Proposition}
\newtheorem*{theorem*}{Theorem}
\theoremstyle{remark}
\newtheorem{remark}[theorem]{Remark}
\newtheorem{definition}[theorem]{Definition}
\numberwithin{equation}{section}
\newcommand{\ci}[1]{_{ {}_{\scriptstyle #1}}}
\newcommand{\ti}[1]{_{\scriptstyle \text{\rm #1}}}
\newcounter{vremennyj}
\renewcommand{\eqref}[1]{Equation (\ref{#1})}
\begin{document}
\title{A note on martingale Hardy spaces}

\author{Jingguo Lai}

\address{Department of Mathematics \\ Brown University \\ Providence, RI 02912 \\ USA}
\email{jglai@math.brown.edu}




\keywords{Carleson sequence, balayage, maximal function, square function}

\begin{abstract}

We investigate the relation between Carleson sequence and balayage, and use this to give an easy proof of the equivalence of the $L^1$-norms of the maximal function and the square function in non-honogeneous martingale settings.   

\end{abstract}

\maketitle

\section{Introduction and the main theorem}
In this note, we attempt to give an easy proof of the celebrated theorem:
\begin{center}
\emph{The $L^1$-norms of the maximal function and square function are equivalent.}
\end{center}

\par Throughout the note, we will work on the real line $\mathbb{R}$ under Lebesgue measure, and assume our $\sigma$-algebras are generated by disjoint intervals. The notation $\lesssim$, $\gtrsim$ stand for one-sided estimates up to an absolute constant, and the notation $\approx$ stands for two-sided estimates up to an absolute constant. We will introduce the basic set-up following from \cite{ST}.

\begin{definition}
A \emph{lattice} $\mathcal{L}$ is a collection of non-trivial finite intervals of $\mathbb{R}$ with the following properties
\begin{enumerate}
\item $\mathcal{L}$ is a union of \emph{generations} $\mathcal{L}_k, k\in\mathbb{Z}$, where each generation is a collection of disjoint intervals, covering $\mathbb{R}$.
\item For each $k\in\mathbb{Z}$, the covering $\mathcal{L}_{k+1}$ is a finite refinement of the covering $\mathcal{L}_k$, i.e. each interval $I\in\mathcal{L}_k$ is a finite union of disjoint intervals  $J\in\mathcal{L}_{k+1}$. We allow the situation where there is only one such interval $J$, i.e. $J=I$; this means that $I\in\mathcal{L}_k$ also belongs to the generation $\mathcal{L}_{k+1}$.
\end{enumerate}
We say a lattice $\mathcal{L}$ is \emph{homogenous} if
\begin{enumerate}
\item Each interval $I\in\mathcal{L}_k$ is a union of \emph{at most} $r$ intervals $J\in\mathcal{L}_{k+1}$.
\item There exists a constant $K<\infty$ such that $\frac{|I|}{|J|}\leq K$ for every $I\in\mathcal{L}_k$ and every $J\in\mathcal{L}_{k+1}, J\subseteq I$.
\end{enumerate}
The standard dyadic lattice $\mathcal{D}=\{([0,1)+j)2^k:j,k\in\mathbb{Z}\}$ is an example of homogenous lattice.
\end{definition}

\begin{remark}
Let $\mathfrak{A}_k$ be the $\sigma$-algebra generated by $\mathcal{L}_k$, i.e. countable unions of intervals in $\mathcal{L}_k$. Let $\mathfrak{A}_{-\infty}$ be the largest $\sigma$-algebra contained in all $\mathfrak{A}_k, k\in\mathbb{Z}$, i.e.  $\mathfrak{A}_{-\infty}=\bigcap_{k\in\mathbb{Z}}\mathfrak{A}_k$, and let $\mathfrak{A}_\infty$ be the smallest $\sigma$-algebra containing all $\mathfrak{A}_k, k\in\mathbb{Z}$. The structures of $\sigma$-algebras $\mathfrak{A}_{-\infty}$ and $\mathfrak{A}_\infty$ can be described as follows\\
$\mathfrak{A}_{-\infty}$ is the $\sigma$-algebra generated by all the intervals $I$ of form
$$I=\bigcup_{k\in\mathbb{Z}}I_k, ~\textup{where}~ I_k\in\mathcal{L}_k, I_k\subseteq I_{k-1}.$$
Note that $\mathbb{R}$ is a disjoint union of such intervals $I$ and at most countably many points (we might need to add left endpoints to the intervals $I$). Let us denote the collection of such intervals $I$ by $\mathfrak{A}_{-\infty}^0$. Define
$$\mathfrak{A}_{-\infty}^{0,\textup{fin}}=\{I\in\mathfrak{A}_{-\infty}^0:|I|<\infty\};$$
'fin' here is to remind that the set consists of intervals of finite measure.\\
Instead of describing $\mathfrak{A}_\infty$, let us describe the corresponding measurable functions. \emph{Namely, a function $f(x)$ is $\mathfrak{A}_\infty$-measurable, if it is Borel measurable and it is constant on intervals $I$}.
$$I=\bigcap_{k\in\mathbb{Z}}I_k, ~\textup{where}~ I_k\in\mathcal{L}_k, I_k\subseteq I_{k-1}.$$
Clearly, such intervals $I$ do not intersect, so there can only be countably many of them.\\
In this note, we always assume that all functions are $\mathfrak{A}_\infty$-measurable.
\end{remark}

\begin{definition}
The \emph{averaging operator} $\mathbb{E}_{\ci{I}}$  is defined to be 
$$\mathbb{E}_{\ci{I}}f=\langle f\rangle_{\ci{I}}\mathbf{1}_{\ci{I}}=\left(\frac{1}{|I|}\int_If\right)\mathbf{1}_{\ci{I}},$$ 
where $\mathbf{1}_{\ci{I}}$ is the indicator function of the interval $I$. \\ 
Let $\mathbb{E}_k$ denote the \emph{conditional expectation}
$$\mathbb{E}_kf=\mathbb{E}\left[f|\mathfrak{A}_k\right]=\sum_{I\in\mathcal{L}_k}\mathbb{E}_{\ci{I}}f.$$
For an interval $I\in\mathcal{L}$, let rk($I$) be the \emph{rank} of the interval $I$, i.e. the largest number $k$ such that $I\in\mathcal{L}_k$.\\
For an interval $I\in\mathcal{L}, \textrm{rk}(I)=k$, a \emph{child} of $I$ is an interval $J\in\mathcal{L}_{k+1}$ such that $J\subseteq I$ (actually we can write $J\subsetneqq I$). The colletion of all children of $I$ is denoted by child($I$). Correspondingly, $I$ is called the \emph{parent} of $J$.\\
The \emph{difference operator} $\Delta_{\ci{I}}$ is defined to be
$$\Delta_{\ci{I}}f=-\mathbb{E}_{\ci{I}}f+\sum_{J\in\textup{child}(I)}\mathbb{E}_{\ci{J}}f.$$
Let $\Delta_k$ denote the \emph{martingale difference}
$$\Delta_kf=\mathbb{E}_kf-\mathbb{E}_{k-1}f=\sum_{I\in\mathcal{L}, \textup{rk}(I)=k-1}\Delta_{\ci{I}}f.$$
\end{definition}

To motivate the definition of the square function as well as for our later purpose, let us prove the following proposition:

\begin{proposition}
For a function $f(x)\in L^p$, where $p > 1$, 
\begin{align}
f=\sum_{I\in\mathcal{L}}\Delta_{\ci{I}}f+\sum_{I\in\mathfrak{A}_{-\infty}^{0, \textup{fin}}}\mathbb{E}_{\ci{I}}f \ \ \ \ a.e.~and~in~L^p.\label{eq1}
\end{align}
\end{proposition}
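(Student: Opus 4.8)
The plan is to recognize the right-hand side as a telescoping martingale sum and to invoke the martingale convergence theorems in both time directions. First I would reorganize the interval sum by grouping intervals according to their rank: since every $I\in\mathcal{L}$ has a unique rank and, by definition, $\Delta_k f=\sum_{I\in\mathcal{L},\,\textup{rk}(I)=k-1}\Delta_{\ci{I}}f$, summing over all generations gives
$$\sum_{I\in\mathcal{L}}\Delta_{\ci{I}}f=\sum_{k\in\mathbb{Z}}\Delta_k f,$$
where any interval coinciding with its unique child contributes $\Delta_{\ci{I}}f=0$ and so causes no double counting. Telescoping the martingale differences then yields, for integers $m<n$,
$$\sum_{k=m+1}^{n}\Delta_k f=\mathbb{E}_n f-\mathbb{E}_m f,$$
so the whole matter reduces to computing $\lim_{n\to\infty}\mathbb{E}_n f$ and $\lim_{m\to-\infty}\mathbb{E}_m f$, in both the a.e.\ and the $L^p$ senses.

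For the forward limit, the $\sigma$-algebras $\mathfrak{A}_k$ increase to $\mathfrak{A}_\infty$; since every function in sight is assumed $\mathfrak{A}_\infty$-measurable, Doob's martingale convergence theorem gives $\mathbb{E}_n f\to\mathbb{E}[f|\mathfrak{A}_\infty]=f$ almost everywhere. For the backward limit, the $\mathfrak{A}_k$ decrease to $\mathfrak{A}_{-\infty}$ as $k\to-\infty$, and the reverse (downward) martingale convergence theorem gives $\mathbb{E}_m f\to\mathbb{E}[f|\mathfrak{A}_{-\infty}]$ almost everywhere. In both cases the hypothesis $p>1$ lets me upgrade to convergence in $L^p$: Doob's maximal inequality bounds the maximal function $\sup_k|\mathbb{E}_k f|$ in $L^p$, providing an integrable majorant, so dominated convergence promotes the pointwise statements to norm convergence. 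Passing to the limit in the telescoping identity then establishes
$$f-\mathbb{E}[f|\mathfrak{A}_{-\infty}]=\sum_{k\in\mathbb{Z}}\Delta_k f=\sum_{I\in\mathcal{L}}\Delta_{\ci{I}}f$$
a.e.\ and in $L^p$.

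It remains to identify $\mathbb{E}[f|\mathfrak{A}_{-\infty}]$ with $\sum_{I\in\mathfrak{A}_{-\infty}^{0,\textup{fin}}}\mathbb{E}_{\ci{I}}f$, and this step is the one I expect to require the most care. Using the description of $\mathfrak{A}_{-\infty}$ from the Remark, the atoms of $\mathfrak{A}_{-\infty}$ are, up to a countable (hence null) set of points, precisely the intervals $I=\bigcup_k I_k\in\mathfrak{A}_{-\infty}^0$, and on a finite-measure atom the conditional expectation is the average $\langle f\rangle_{\ci{I}}\mathbf{1}_{\ci{I}}=\mathbb{E}_{\ci{I}}f$. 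The subtle point is the atoms of infinite measure: here I would show the conditional expectation vanishes. Writing such an atom as $I=\bigcup_k I_k$ with $I_k\in\mathcal{L}_k$ and $|I_k|\to\infty$, Hölder's inequality gives $|\langle f\rangle_{\ci{I_k}}|\le|I_k|^{-1/p}\|f\|_p\to 0$, which is exactly where $1<p<\infty$ enters. Hence every infinite-measure atom contributes zero and only the sum over $\mathfrak{A}_{-\infty}^{0,\textup{fin}}$ survives; combining this with the previous display completes the proof.
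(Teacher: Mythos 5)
Your proposal is correct in substance and its skeleton is the same as the paper's: the telescoping identity $\sum_{m<k\leq n}\Delta_k f=\mathbb{E}_n f-\mathbb{E}_m f$, identification of the two one-sided limits, and an upgrade from a.e.\ to $L^p$ convergence via a maximal inequality and dominated convergence. Where you differ is in how the a.e.\ limits are justified: you cite Doob's upward and downward martingale convergence theorems, while the paper argues by hand (Lebesgue differentiation when $\bigcap_k I_k(x)$ is null, constancy of $f$ on positive-measure atoms of $\mathfrak{A}_\infty$, elementary convergence of averages over increasing intervals, and a H\"older bound on atoms of infinite measure). One caveat about your citations: in their textbook form both theorems live on probability spaces, or at least require $L^1$-bounded martingales, and neither applies verbatim on $(\mathbb{R},dx)$ with $f\in L^p$ but possibly $f\notin L^1$. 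The downward citation is the more serious issue, since $\mathbb{E}[f|\mathfrak{A}_{-\infty}]$ need not even be well defined: Lebesgue measure restricted to $\mathfrak{A}_{-\infty}$ fails to be $\sigma$-finite when there are atoms of infinite measure, and no $\mathfrak{A}_{-\infty}$-measurable function can satisfy the defining averaging property on such an atom. Fortunately, the repairs you sketch are exactly what is needed and make the citations dispensable: the H\"older estimate $|\langle f\rangle_{\ci{I_m(x)}}|\leq |I_m(x)|^{-1/p}\|f\|_p\to 0$ proves the convergence to $0$ on infinite atoms directly (this is where $p>1$ enters, just as in the paper); on finite atoms the averages over increasing intervals converge elementarily to the average over the union; and in the forward direction the $L^p$ maximal inequality you already invoke, combined with the trivial convergence on the dense class of functions measurable with respect to some $\mathfrak{A}_N$, yields the a.e.\ statement rigorously. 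Written out that way, your argument becomes self-contained and is, in effect, the paper's proof; what the abstract phrasing buys is brevity, at the cost of obscuring where the hypotheses $p>1$ and $\mathfrak{A}_\infty$-measurability actually do their work.
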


\begin{proof}
One can easily see that
$$\sum_{I\in\mathcal{L}, m\leq\textup{rk}(I)<n}\Delta_{\ci{I}}f=\sum_{m<k\leq n}\Delta_kf=\mathbb{E}_nf-\mathbb{E}_mf.$$
\begin{itemize}
\item $\mathbb{E}_nf\rightarrow f$ a.e. as $n\rightarrow \infty$.\\
For each $x\in\mathbb{R}$, consider the unique interval $I_k(x)$ in $\mathcal{L}_k$ containing $x$, and let $I(x)=\bigcap_{k\in\mathbb{Z}}I_k(x)\in\mathfrak{A}_\infty$.\\
If $|I(x)|=0$, then by Lebesgue Differentiation Theorem, 
$$\lim_{n\rightarrow\infty}\mathbb{E}_nf(x)=\lim_{n\rightarrow\infty}\frac{1}{|I_n(x)|}\int_{I_n(x)}f=f(x)$$
holds almost everywhere for such $x$.\\
If $|I(x)|>0$, then since $f$ is $\mathfrak{A}_\infty$-measurable, we have pointwisely,
$$\lim_{n\rightarrow\infty}\mathbb{E}_nf(x)=\lim_{n\rightarrow\infty}\frac{1}{|I_n(x)|}\int_{I_n(x)}f=\frac{1}{|I(x)|}\int_{I(x)}f=f(x).$$
\item $\mathbb{E}_mf\rightarrow \sum_{I\in\mathfrak{A}_{-\infty}^{0, \textup{fin}}}\mathbb{E}_{\ci{I}}f$ as $m \rightarrow -\infty$.\\
For each $x\in\mathbb{R}$, take now $I(x)=\bigcup_{k\in\mathbb{Z}}I_k(x)$.\\
If $|I(x)|=\infty$, then since $f\in L^p$, we can estimate
$$|\mathbb{E}_mf(x)|\leq\left[\frac{1}{|I_m(x)|}\int_{I_m(x)}|f|^p\right]^\frac{1}{p}\leq|I_m(x)|^{-\frac{1}{p}}||f||_p\rightarrow0$$
as $m\rightarrow-\infty$.\\
If $I(x)\in\mathfrak{A}_{-\infty}^{0,\textup{fin}}$, then pointwisely,
$$\lim_{m\rightarrow-\infty}\mathbb{E}_mf(x)=\lim_{m\rightarrow-\infty}\frac{1}{|I_m(x)|}\int_{I_m(x)}f=\frac{1}{|I(x)|}\int_{I(x)}f=\mathbb{E}_{\ci{I(x)}}f(x).$$
\item $f=\sum_{I\in\mathcal{L}}\Delta_{\ci{I}}f+\sum_{I\in\mathfrak{A}_{-\infty}^{0, \textup{fin}}}\mathbb{E}_{\ci{I}}f$ in $L^p$.\\
We have proved the almost everywhere convergence. Note that
$$|\mathbb{E}_nf-f|\leq 2f^*~~~\ \ \textup{and}\ \ \ ~~~\left|\mathbb{E}_mf-\sum_{I\in\mathfrak{A}_{-\infty}^{0, \textup{fin}}}\mathbb{E}_{\ci{I}}f\right|\leq 2f^*,$$
where $f^*$ is the Hardy-Littlewood maximal funtion of $f$. Hardy-Littlewood Maximal Inequality and Dominated Convergence Theorem imply the $L^p$ convergence.
\end{itemize}
\end{proof}

\begin{definition}
The \emph{maximal function} is defined to be 
$$Mf(x)=\sup_{x\in I, I\in\mathcal{L}}|\mathbb{E}_{\ci{I}}f|=\sup_{x\in I, I\in\mathcal{L}}\frac{1}{|I|}\left|\int_If\right|.$$\\
We say that $f\in H_*^1$ if $Mf\in L^1$ with norm $||f||_{H_*^1}=||Mf||_1$.\\
The \emph{square function} is defined to be
$$Sf(x)=\left[\sum_{I\in\mathcal{L}}|\Delta_{\ci{I}}f|^2+\sum_{I\in\mathfrak{A}_{-\infty}^{0, \textup{fin}}}|\mathbb{E}_{\ci{I}}f|^2\right]^\frac{1}{2}.$$\\
We say that $f\in H_S^1$ if $Sf\in L^1$ with norm $||f||_{H_S^1}=||Sf||_1$.\\
Both $H_*^1$ and $H_S^1$ are called \emph{martingale Hardy spaces}.  
\end{definition}

Now we are ready to state our main theorem:

\begin{theorem}[B. J. Davis \cite{BJD}] $||Mf||_1\approx||Sf||_1$  or $H_*^1=H_S^1$.\label{thm1} 
\end{theorem}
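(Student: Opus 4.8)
The plan is to deduce the two-sided bound from two complementary good-$\lambda$ inequalities together with an integration argument. Concretely, I would prove that there is an absolute constant $C$ such that for every $\lambda>0$ and every $0<\delta<1$,
\[ |\{\,Mf>2\lambda,\ Sf\le\delta\lambda\,\}|\le C\delta^{2}\,|\{\,Mf>\lambda\,\}| \]
and, symmetrically,
\[ |\{\,Sf>2\lambda,\ Mf\le\delta\lambda\,\}|\le C\delta^{2}\,|\{\,Sf>\lambda\,\}|. \]
Granting these, the elementary identity $\|F\|_{1}=2\int_{0}^{\infty}|\{F>2\lambda\}|\,d\lambda$ together with $|\{F>2\lambda\}|\le|\{F>2\lambda,\ G\le\delta\lambda\}|+|\{G>\delta\lambda\}|$ yields $\|F\|_{1}\le 2C\delta^{2}\|F\|_{1}+\tfrac{2}{\delta}\|G\|_{1}$; choosing $\delta$ with $2C\delta^{2}<1$ and absorbing (which one justifies a priori by truncating the martingale) gives $\|F\|_{1}\lesssim\|G\|_{1}$. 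Applying this with $(F,G)=(Mf,Sf)$ and then with $(F,G)=(Sf,Mf)$ gives both inequalities, hence $H_{*}^{1}=H_{S}^{1}$.

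Each good-$\lambda$ inequality I would obtain by a Calderón--Zygmund stopping-time argument resting on two facts available from the setup: the $L^{2}$ isometry $\int (Sf)^{2}=\int |f|^{2}$, which is immediate from the orthogonality of the terms in the decomposition \eqref{eq1}, and the pointwise bound $|\Delta_{\ci{I}}f|\le 2Mf$, since on a child $J$ of $I$ one has $\Delta_{\ci{I}}f=\langle f\rangle_{J}-\langle f\rangle_{\ci{I}}$. For the inequality controlling $S$ by $M$, I fix $\lambda$ and let $\{Q_{j}\}$ be the maximal intervals on which the square function truncated at generation $k$ (an $\mathfrak{A}_{k}$-measurable, nondecreasing quantity) first exceeds $\lambda$; these are disjoint and $\{Sf>\lambda\}=\bigsqcup_{j}Q_{j}$. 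On each $Q_{j}$ the martingale restarted at $Q_{j}$ has square function $\le\lambda$ up to entry, and on the event $\{Mf\le\delta\lambda\}$ all increments are $\le 2\delta\lambda$, so a second-moment estimate for the post-entry martingale, combined with Chebyshev and the $L^{2}$ isometry localized to $Q_{j}$, bounds $|\{Sf>2\lambda\}\cap Q_{j}|$ by $C\delta^{2}|Q_{j}|$; summing over $j$ gives the claim. The inequality controlling $M$ by $S$ is entirely parallel, stopping instead at the first crossing of $\lambda$ by $|\mathbb{E}_{\ci{I}}f|$.

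The step I expect to be the main obstacle is the overshoot, or big-jump, phenomenon intrinsic to the discrete non-homogeneous setting: a single difference $\Delta_{\ci{I}}f$, equivalently a stopped average $\langle f\rangle_{Q_{j}}$, may cross the threshold $\lambda$ by an amount that is not controlled pointwise, so the restarted martingale need not be uniformly small and the naive Chebyshev bound degrades. The observation that rescues the argument is that a large single difference forces the complementary quantity to be large at the same place — if $\langle f\rangle_{Q_{j}}$ overshoots, then already $Sf\gtrsim\lambda$ on $Q_{j}$, so $Q_{j}$ is excluded from the good event $\{Mf\le\delta\lambda\}$ — while the residual big-jump mass survives only in aggregate, where it is dominated by $\int Mf$ (respectively $\int Sf$). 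Packaging this aggregate control is exactly where the relation between Carleson sequences and balayage enters: the sequence $\{\,|\Delta_{\ci{I}}f|^{2}|I|\,\}$ restricted to a single stopping layer is Carleson with constant comparable to the layer height, and the balayage estimate converts its Carleson norm into the required $L^{1}$ bound, replacing the classical Davis decomposition of the big jumps. I expect the careful bookkeeping of these big-jump terms, ensuring that the quadratic gain $\delta^{2}$ survives their presence, to be the crux of the whole proof.
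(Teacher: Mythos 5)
Your whole plan hinges on the two good-$\lambda$ inequalities with the \emph{multiplicative} gain $C\delta^{2}|\{Mf>\lambda\}|$ (resp.\ $C\delta^{2}|\{Sf>\lambda\}|$), and that is precisely what is not available in the setting of this paper, which allows non-homogeneous lattices: an interval may split into arbitrarily many children of wildly different sizes, so martingale jumps admit no predictable control. Run your stopping-time argument: after restarting at the entry into a stopping interval $Q_{j}$ and stopping when the truncated square function (or the running average) first crosses the threshold, the Chebyshev step needs a bound on the second moment of the restarted, stopped martingale over \emph{all} of $Q_{j}$, and this second moment contains the square of the jump incurred \emph{at} the stopping time,
\begin{equation*}
\int_{Q_{j}}\bigl(S_{\sigma}^{2}f-S_{\mu}^{2}f\bigr)\;\leq\;\delta^{2}\lambda^{2}|Q_{j}|+\int_{Q_{j}}|\Delta_{\sigma}f|^{2},
\end{equation*}
where $\sigma$ cannot be arranged to stop \emph{before} a big jump because the jumps are not predictable. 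Your rescue --- ``an overshooting interval is excluded from the good event'' --- only deletes points from the left-hand side of the good-$\lambda$ inequality; it does not delete the overshoot's contribution from this integral, which lives on the whole of $Q_{j}$ and in a non-homogeneous lattice is not $O(\delta^{2}\lambda^{2}|Q_{j}|)$. Your fallback --- bounding the big-jump mass ``in aggregate by $\int Mf$'' --- replaces the multiplicative bound by an additive error term of the form $\lambda^{-2}(\cdots)$, and with such a term the absorption step $\|F\|_{1}\leq 2C\delta^{2}\|F\|_{1}+\tfrac{2}{\delta}\|G\|_{1}$ no longer closes. This is not a technicality: the clean multiplicative good-$\lambda$ inequalities you state would, by the same extrapolation, give $\|Sf\|_{p}\approx\|Mf\|_{p}$ for all $0<p<\infty$, which is exactly the part of Burkholder--Gundy theory that requires a regular (homogeneous) filtration. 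The known way to salvage your route for general martingales is the Davis decomposition --- split off the differences that are large relative to the predictable maximum of the preceding jumps and control that part separately in $L^{1}$, applying good-$\lambda$ only to the remainder --- and that is the step you have labelled ``the crux'' and not supplied; it is the entire difficulty of the theorem.

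I would also point out that your proposed role for Carleson sequences (packaging big-jump mass inside a good-$\lambda$ argument) has no counterpart in the paper and is itself unsubstantiated: the Carleson constant of $\{|\Delta_{\ci{I}}f|^{2}|I|\}$ restricted to a stopping layer is again uncontrolled exactly because of the jump at the stopping time. The paper's proof avoids stopping-time comparisons of level sets altogether and runs a duality argument instead: for $\|Mf\|_{1}\lesssim\|Sf\|_{1}$ it writes $\int M_{n}f=\int fg$ where $g$ is the balayage of an explicit Carleson sequence built from the maximizing intervals, so $\|g\|\ti{BMO}\leq 2$ by the easy half of \textbf{Theorem \ref{thm2}}, and then applies Fefferman's inequality (\textbf{Theorem \ref{thm3}}); for $\|Sf\|_{1}\lesssim\|Mf\|_{1}$ it decomposes an arbitrary BMO function as $\phi$ plus a balayage via the hard half of \textbf{Theorem \ref{thm2}}, pairs $f$ against the balayage using \textbf{Lemma \ref{L1}}, and then chooses $g$ dual to $Sf$. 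If you want to keep your good-$\lambda$ scheme, you must first prove a Davis-type decomposition for non-homogeneous lattices; otherwise the duality route is the one that genuinely sidesteps the jump problem.
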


This theorem and its proof can be found for example in \cite{BJD}, \cite{AMG}, and \cite{AMG1}, all using probabilistic method. We will give a relatively easier analysis proof here, in the spirit of \cite{AMG1}, but with the help of Carleson sequence and balayage, which is also interesting by itself.

\section{Carleson sequence and balayage}
This section is devoted to investigating the relation between Carleson sequence and balayage. We start with the following definitions:

\begin{definition}
A sequence of non-negative numbers $\{a_{\ci{I}}\}_{\ci{I\in\mathcal{L}}}$ is a \emph{Carleson sequence}, if there exists a constant $C>0$ such that
$$\frac{1}{|I|}\sum_{J\in\mathcal{L}, J\subseteq I}a_{\ci{J}}\leq C$$
for each $I\in\mathcal{L}\bigcup\mathfrak{A}_{-\infty}^{0, \textup{fin}}$. Moreover, we denote the infimun of such constant by Carl($a_{\ci{I}}$).
\end{definition}

\begin{definition}
Given a  sequence of numbers $\{a_{\ci{I}}\}_{\ci{I\in\mathcal{L}}}$, its \emph{balayage} is defined to be the function
$$g(x)=\sum_{I\in\mathcal{L}}\frac{1}{|I|}\mathbf{1}_{\ci{I}}a_{\ci{I}}.$$
\end{definition}

\begin{definition}\label{Def BMO}
A locally integrable function $g(x)$ on $\mathbb{R}$ is in the class of \emph{martingale BMO}, if
\begin{itemize}
\item $\left[\frac{1}{|I|}\displaystyle\int_I\sum_{J\in\mathcal{L}, J\subseteq I}|\Delta_{\ci{J}}g|^2\right]^{\frac{1}{2}}=C_1<\infty$, for all $I\in\mathcal{L}$.
\item $||\Delta_{\ci{I}}g||_\infty=C_2<\infty$, for all $I\in\mathcal{L}$.
\end{itemize}
Moreover, we define $||g||\ti{BMO}=\max\{C_1, C_2\}.$
\end{definition}

\begin{remark}
As in the classical case, martingale BMO space does not distinguish constant functions. In other words, we should think of martingale BMO space as the quotient of the above class by the space of $\{g\in L_{loc}^1: \mathbb{E}_{\ci{I}}g$ \textup{is constant}, \textup{for al}l $I\in\mathfrak{A}_{-\infty}^0\}$. \label{rmk}
\end{remark}

\begin{remark} This definition of martingale BMO functions is stronger than the classical one in harmonic analysis, since by (\ref{eq1}) and othogonality of the difference operators,
\begin{align}
\frac{1}{|I|}\int_I|g(x)-\langle g\rangle_{\ci{I}}|^2=\frac{1}{|I|}\int_I \lim_{n\rightarrow\infty}|\sum_{J\in\mathcal{L}, J\subseteq I, \textup{rk}(I)<n}\Delta_{\ci{J}}g|^2
=\frac{1}{|I|}\int_I\sum_{J\in\mathcal{L}, J\subseteq I}|\Delta_{\ci{J}}g|^2,\label{eq2}
\end{align}
where $\langle g\rangle_{\ci{I}}=\frac{1}{|I|}\int_Ig$ is the average of $g(x)$ over $I$.\\
It was known for a long time that the additional condition is needed. One can look, for example, at \cite{AMG} in detail.
\end{remark}

Now we have the precise statement:

\begin{theorem}
\textup{(Carleson sequence and balayage)}\\
\begin{enumerate}
\item Given a Carleson sequence $\{|a_{\ci{I}}|\}_{\ci{I\in\mathcal{L}}}$, the following inequality holds
\begin{align}
\left|\left|g(x)=\sum_{I\in\mathcal{L}}\frac{1}{|I|}\mathbf{1}_{\ci{I}}a_{\ci{I}}\right|\right|\ti{BMO}\leq2\textup{Carl}(|a_{\ci{I}}|).\label{eq3}
\end{align}
\item For each function $g(x)$ in the class of martingale BMO, there exists $\phi(x)\in L^\infty$, and a Carleson sequence $\{|a_{\ci{I}}|\}_{\ci{I\in\mathcal{L}}}$, such that
\begin{align}
g=\phi+\sum_{I\in\mathcal{L}}\frac{1}{|I|}\mathbf{1}_{\ci{I}}a_{\ci{I}},\label{eq4}
\end{align}
where $||\phi||_\infty\leq2||g||\ti{BMO}$, and $\textup{Carl}(|a_{\ci{I}}|)\leq3||g||\ti{BMO}$.
\end{enumerate}\label{thm2}
\end{theorem}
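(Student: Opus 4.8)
The plan is to handle the two assertions separately: in (1) I verify directly both defining conditions of Definition~\ref{Def BMO} for the balayage, and in (2) I build an explicit stopping-time decomposition driven by the threshold set by $||g||\ti{BMO}$.

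For part (1), write $A=\textup{Carl}(|a_{\ci{I}}|)$. The starting point is the elementary identity $\langle g\rangle_{\ci{I}}=\frac1{|I|}\sum_{J\subseteq I}a_{\ci{J}}+\sum_{J\supsetneq I}\frac{a_{\ci{J}}}{|J|}$, valid because in a lattice any two intervals are nested or disjoint. To bound $C_2=||\Delta_{\ci{I}}g||_\infty$, I compute $\Delta_{\ci{I}}g$ on a child $K\in\textup{child}(I)$ as $\langle g\rangle_{\ci{K}}-\langle g\rangle_{\ci{I}}$; the ancestor tails cancel and this collapses to $\frac1{|K|}\sum_{J\subseteq K}a_{\ci{J}}-\frac1{|I|}\sum_{J\subsetneq I}a_{\ci{J}}$. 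Each average has modulus at most $A$ by the Carleson condition on $\{|a_{\ci{J}}|\}$, so $||\Delta_{\ci{I}}g||_\infty\le 2A$; this is what produces the constant $2$. To bound $C_1$, I use \eqref{eq2} to replace $C_1^2$ by the variance $\frac1{|I|}\int_I|g-\langle g\rangle_{\ci{I}}|^2$. Restricting the balayage to $I$ shows $g-\langle g\rangle_{\ci{I}}=h-\langle h\rangle_{\ci{I}}$ on $I$ for the local balayage $h=\sum_{J\subseteq I}\frac1{|J|}\mathbf 1_{\ci{J}}a_{\ci{J}}$, so it suffices to bound $\frac1{|I|}\int_I h^2$. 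Expanding $\int_I h^2=\sum_{J\subseteq I}\frac{a_{\ci{J}}}{|J|}\int_J h$ and splitting the inner integral into the descendants and the strict ancestors of $J$, two applications of the Carleson condition give $\int_I h^2\le 2A^2|I|$, i.e. $C_1\le\sqrt2\,A$. Hence $||g||\ti{BMO}=\max\{C_1,C_2\}\le 2A$.

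For part (2), set $M=||g||\ti{BMO}$ and fix the threshold $\lambda=2M$. I build a family $\mathcal S$ of stopping intervals by declaring $I\in\mathcal S$ to be a stopping interval whenever $I$ is a maximal subinterval of its nearest stopping ancestor $Q$ with $|\langle g\rangle_{\ci{I}}-\langle g\rangle_{\ci{Q}}|>\lambda$. For each such $I$ I set $a_{\ci{I}}=|I|(\langle g\rangle_{\ci{I}}-\langle g\rangle_{\ci{Q}})$, and $a_{\ci{I}}=0$ otherwise. The balayage then telescopes along the chain of stopping intervals containing a point $x$, collapsing to $\langle g\rangle_{\ci{S(x)}}$ modulo the average over the coarsest stopping interval through $x$, where $S(x)$ is the smallest stopping interval containing $x$ (or $g(x)$ itself if the chain does not terminate, by the Lebesgue differentiation argument used in the proof of \eqref{eq1}). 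Defining $\phi=g-\sum_{J}\frac1{|J|}\mathbf 1_{\ci{J}}a_{\ci{J}}$ gives the required form \eqref{eq4}.

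Two estimates remain. First, $||\phi||_\infty\le\lambda=2M$: by maximality no interval strictly between $x$ and $S(x)$ triggers stopping, so $|\langle g\rangle_{\ci{I}}-\langle g\rangle_{\ci{S(x)}}|\le\lambda$ for every lattice interval $x\in I\subseteq S(x)$; letting $I\downarrow\{x\}$, and using $C_2\le M$ to control the single crossing jump, yields $|\phi(x)|=|g(x)-\langle g\rangle_{\ci{S(x)}}|\le\lambda$. Second, $\textup{Carl}(|a_{\ci{I}}|)\le 3M$ is the heart of the matter and comes from $C_1$. For a stopping interval $Q$ its stopping children are disjoint, each with $(\langle g\rangle_{\ci{I}}-\langle g\rangle_{\ci{Q}})^2>\lambda^2$, so by \eqref{eq2}, summing over those children $I$,
$$\lambda^2\sum_{I}|I|<\sum_{I}|I|\,(\langle g\rangle_{\ci{I}}-\langle g\rangle_{\ci{Q}})^2\le\int_Q|g-\langle g\rangle_{\ci{Q}}|^2\le M^2|Q|,$$
so the stopping children pack with ratio $\theta=(M/\lambda)^2\le\tfrac14$. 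A Cauchy--Schwarz estimate of $\sum_{I}|a_{\ci{I}}|$ against this energy, summed over the geometrically decaying generations of stopping intervals inside any $P\in\mathcal L\cup\mathfrak A_{-\infty}^{0,\textup{fin}}$, bounds $\frac1{|P|}\sum_{I\subseteq P}|a_{\ci{I}}|$ by a constant multiple of $M$ comfortably below $3M$.

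The main obstacle I anticipate is twofold. One is getting the Carleson packing to close with the right numerical constant, which forces both the threshold choice $\lambda=2M$ and the simultaneous use of the two BMO conditions -- $C_2$ for the increment bound giving $||\phi||_\infty$, and $C_1$ for the packing giving the Carleson constant -- precisely the extra strength of Definition~\ref{Def BMO} over classical BMO. The other is the bookkeeping at the coarsest scale, where the undetermined top averages must be absorbed using that martingale BMO is taken modulo the $\mathfrak A_{-\infty}$-measurable functions described in Remark~\ref{rmk}.
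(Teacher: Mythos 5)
Part (1) of your proposal is correct and is essentially the paper's own proof: your collapse of $\Delta_{\ci{I}}g$ on a child into the two averages $\frac1{|K|}\sum_{J\subseteq K}a_{\ci{J}}-\frac1{|I|}\sum_{J\subsetneqq I}a_{\ci{J}}$, and your expansion of the local square with two applications of the Carleson condition, are exactly the paper's computations, giving $C_2\le 2\,\textup{Carl}(|a_{\ci{I}}|)$ and $C_1\le\sqrt{2}\,\textup{Carl}(|a_{\ci{I}}|)$.

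In part (2) your construction is genuinely different from the paper's -- a stopping time on jumps of averages $|\langle g\rangle_{\ci{I}}-\langle g\rangle_{\ci{Q}}|>\lambda=2M$ (where $M=||g||\ti{BMO}$), with packing obtained from the $L^2$ energy bound $C_1$, instead of the paper's iterated Calder\'on--Zygmund decomposition of $|g-\langle g\rangle_{\ci{Q}}|$ at height $2M$ with its measure halving -- and the bound $||\phi||_\infty\le 2M$ does go through. The gap is the final claim that Cauchy--Schwarz bounds $\frac1{|P|}\sum_{I\subseteq P}|a_{\ci{I}}|$ ``comfortably below $3M$''. Your per-parent estimate $\sum_{\{I\in\mathcal S:\,Q(I)=Q\}}|a_{\ci{I}}|\le\sqrt{\theta}\,M|Q|\le\frac12M|Q|$ accounts only for stopping intervals whose stopping parent $Q$ lies inside $P$. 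The maximal stopping intervals $R\subseteq P$ -- in particular $P$ itself when $P$ is a stopping interval -- have stopping parents outside $P$, and for their coefficients the only available bound is your single-crossing estimate $|a_{\ci{R}}|\le(\lambda+C_2)|R|\le 3M|R|$, with $\sum_R|R|$ possibly comparable to $|P|$. So your bookkeeping yields at best
\[
\frac{1}{|P|}\sum_{I\subseteq P}|a_{\ci{I}}|\;\le\;3M+\frac12M\cdot\frac{1}{1-\theta}\;=\;\frac{11}{3}M,
\]
and no choice of threshold repairs this: with $\lambda=tM$ the scheme's constant is $(t+1)+\frac{t}{t^2-1}\ge 1+\frac{3\sqrt3}{2}\approx 3.6$. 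The two worst cases are compatible, because the jump of $P$ across its own stopping parent is \emph{not} part of the energy $\sum_{J\subseteq P}|\Delta_{\ci{J}}g|^2\le M^2|P|$ that controls the descendants; a stopping interval $P$ whose jump is near $3M$ (attainable: a drift of nearly $2M$ spread over many generations, then one $\Delta$-jump of size $M$) together with any nonzero interior contribution already pushes $\textup{Carl}(|a_{\ci{I}}|)$ strictly above $3M$. This is precisely why the paper's accounting is multiplicative rather than additive: its CZ stopping rule gives the coefficient bound $3M$ for \emph{every} selected interval, top ones included, and multiplies it by a packing constant, so there is no separate boundary term to add. Your argument does produce a decomposition \eqref{eq4} with $\textup{Carl}(|a_{\ci{I}}|)\le CM$ for an absolute constant $C\approx 4$, which is enough for the application to Theorem \ref{thm1}, but it does not prove the statement with the stated constant $3$.

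A smaller point: like the paper, you leave the coarsest scale informal, but your appeal to Remark \ref{rmk} needs more than a remark -- the undetermined averages $\langle g\rangle$ over the coarsest stopping intervals are constant on generation-zero intervals, not on the elements of $\mathfrak{A}_{-\infty}^{0}$, so they are not literally absorbed by that quotient.
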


\begin{remark}
One might hope that the reverse inequality to (\ref{eq3})  holds in the strict sense, but this is not the case due to \cite{SPAV}. Nevertheless, we still have (\ref{eq4}) as a partial reverse answer, which is good enough for our purpose.
\end{remark}

\begin{proof} 
\begin{enumerate}
\item 
Check by definition\\
\begin{align*}
\frac{1}{|I|}\int_I\sum_{J\in\mathcal{L}, J\subseteq I}|\Delta_{\ci{J}}g|^2
& =\frac{1}{|I|}\int_I|g(x)-\langle g\rangle_{\ci{I}}|^2 \ \ \ \ (\ref{eq2})\\
& =\langle g^2\rangle_{\ci{I}}-\langle g\rangle_{\ci{I}}^2 \\
&=\frac{1}{|I|}\int_I|\sum_{J\in\mathcal{L}}\frac{1}{|J|}\mathbf{1}_{\ci{J}}a_{\ci{J}}|^2-(\frac{1}{|I|}\int_I\sum_{J\in\mathcal{L}}\frac{1}{|J|}\mathbf{1}_{\ci{J}}a_{\ci{J}})^2\\
&=\frac{1}{|I|}\int_I|\sum_{J\in\mathcal{L},J\subseteq I}\frac{1}{|J|}\mathbf{1}_{\ci{J}}a_{\ci{J}}|^2-(\frac{1}{|I|}\int_I\sum_{J\in\mathcal{L},J\subseteq I}\frac{1}{|J|}\mathbf{1}_{\ci{J}}a_{\ci{J}})^2\\
& \leq\frac{1}{|I|}\int_I(\sum_{J\in\mathcal{L},J\subseteq I}\frac{1}{|J|}\mathbf{1}_{\ci{J}}|a_{\ci{J}}|)^2\\
& =\frac{1}{|I|}\int_I\sum_{J, K\subseteq I}\frac{1}{|J|}\frac{1}{|K|}|a_{\ci{J}}||a_{\ci{K}}|\mathbf{1}_{\ci{J}}\mathbf{1}_{\ci{K}}\\
& =\frac{1}{|I|}\sum_{J\subseteq I}|a_{\ci{J}}|\frac{1}{|J|}\sum_{K\subseteq J}|a_{\ci{K}}|+\frac{1}{|I|}\sum_{K\subseteq I}|a_{\ci{K}}|\frac{1}{|K|}\sum_{J\subsetneqq K}|a_{\ci{J}}|\\
& \leq \frac{1}{|I|}\sum_{J\subseteq I}|a_{\ci{J}}| \textup{Carl}(|a_{\ci{I}}|)+\frac{1}{|I|}\sum_{K\subseteq I}|a_{\ci{K}}|\textup{Carl}(|a_{\ci{I}}|)\\
& \leq 2\textup{Carl}(|a_{\ci{I}}|)^2,
\end{align*}
holds for any $I\in\mathcal{L}$, where the forth equality follows from
$$\sum_{J\in\mathcal{L},J\supsetneqq I}\frac{1}{|J|}\mathbf{1}_{\ci{J}}a_{\ci{J}}$$
being a constant on $I$.\\
Moreover, we can compute that for any $I\in\mathcal{L}$,
\begin{align*}
\mathbb{E}_{\ci{I}}g
& =\langle g\rangle_{\ci{I}}\mathbf{1}_{\ci{I}}=(\frac{1}{|I|}\int_I\sum_{k\in\mathcal{L}}\frac{1}{|K|}\mathbf{1}_{\ci{K}}a_{\ci{K}})\mathbf{1}_{\ci{I}}\\
& =(\frac{1}{|I|}\sum_{K\in\mathcal{L},K\subseteq I}a_{\ci{K}})\mathbf{1}_{\ci{I}}+(\sum_{K\in\mathcal{L},K\supsetneqq I}\frac{1}{|K|}a_{\ci{K}})\mathbf{1}_{\ci{I}}.
\end{align*}
Thus, we can estimate
\begin{align*}
|\Delta_{\ci{I}}g| 
& =|-\mathbb{E}_{\ci{I}}g+\sum_{J\in\textup{child}(I)}\mathbb{E}_{\ci{J}}g|\\
& =|\sum_{J\in\textup{child}(I)}(\frac{1}{|J|}\sum_{K\in\mathcal{L},K\subseteq J}a_{\ci{K}})\mathbf{1}_{\ci{J}}+\sum_{J\in\textup{child}(I)}(\sum_{K\in\mathcal{L},K\supsetneqq J}\frac{1}{|K|}a_{\ci{K}})\mathbf{1}_{\ci{J}}\\
& \ \ \ \ -(\frac{1}{|I|}\sum_{K\in\mathcal{L},K\subseteq I}a_{\ci{K}})\mathbf{1}_{\ci{I}}-(\sum_{K\in\mathcal{L},K\supsetneqq I}\frac{1}{|K|}a_{\ci{K}})\mathbf{1}_{\ci{I}}|\\
& =|\sum_{J\in\textup{child}(I)}(\frac{1}{|J|}\sum_{K\in\mathcal{L},K\subseteq J}a_{\ci{K}})\mathbf{1}_{\ci{J}}-(\frac{1}{|I|}\sum_{K\in\mathcal{L},K\subsetneqq I}a_{\ci{K}})\mathbf{1}_{\ci{I}}|\\
& \leq\sum_{J\in\textup{child}(I)}(\frac{1}{|J|}\sum_{K\in\mathcal{L},K\subseteq J}|a_{\ci{K}}|)\mathbf{1}_{\ci{J}}+(\frac{1}{|I|}\sum_{K\in\mathcal{L},K\subsetneqq I}|a_{\ci{K}}|)\mathbf{1}_{\ci{I}}\leq2\textup{Carl}(|a_{\ci{I}}|).
\end{align*}

\item
The proof of this statement is a modification of the proof of John-Nirenberg theorem which is suggested in \cite{JBG}.
\par Given a martingale BMO function $g(x)$, by (\ref{eq2}) we have
$$\sup_{I\in\mathcal{L}}\frac{1}{|I|}\int_I|g(x)-\langle g\rangle_{\ci{I}}|\leq\sup_{I\in\mathcal{L}}\left[\frac{1}{|I|}\int_I|g(x)-\langle g\rangle_{\ci{I}}|^2\right]^\frac{1}{2}=||g||\ti{BMO}.$$
\par Fix the 0-th generation $\mathcal{L}_0$ of the lattice $\mathcal{L}$, recall that $\mathcal{L}_0$ is a collection of non-trivial, finite, disjoint intervals covering $\mathbb{R}$, so we can write $\mathcal{L}_0=\{I_m\}_{m\in\mathfrak{M}}$. 
\par Over each interval $I_m, m\in\mathfrak{M}$, let us first do the Calder\'{o}n-Zygmund decomposition to the function $|g(x)-\langle g\rangle_{\ci{I_m}}|$ at height $\lambda=2||g||\ti{BMO}$ with respect to the lattice $\mathcal{L}$, from this we obtain:\\
A collection of disjoint intervals $\{I_j^{(1)}\}\subseteq\mathcal{L}$ contained in $I_m$, such that over each $I_j^{(1)}$,
$$\frac{1}{|I_j^{(1)}|}\int_{I_j^{(1)}}|g(x)-\langle g\rangle_{\ci{I_m}}|>2||g||\ti{BMO}.$$
This immediately implies
\begin{itemize}
\item $|g(x)-\langle g\rangle_{\ci{I_m}}|\leq2||g||\ti{BMO}$ a.e. over $I_m\setminus\bigcup_jI_j^{(1)}.$\\
For each $x\in I_m\setminus\bigcup_jI_j^{(1)}$, from the Calder\'{o}n-Zygmund decomposition, we know that over all intervals $\{I_k(x)\}_{k\geq1}$,
$$\frac{1}{|I_k(x)|}\int_{I_k(x)}|g(x)-\langle g\rangle_{\ci{I_m}}|\leq2||g||\ti{BMO},$$
where $I_k(x)$ is the unique interval in $\mathcal{L}_k, k\geq 1$ containing $x$.\\
If $I(x)=\bigcap_{k\geq 1}I_k(x)\in\mathfrak{A}_\infty$ has $|I(x)|=0$, then by Lebesgue Differentiation Theorem, we can conclude that
$$|g(x)-\langle g\rangle_{\ci{I_m}}|
=\lim_{k\rightarrow\infty}\frac{1}{|I_k(x)|}\int_{I_k(x)}|g(x)-\langle g\rangle_{\ci{I_m}}|\leq2||g||\ti{BMO}$$
holds almost everywhere for such $x$. \\
If $|I(x)|>0$, then since $g(x)$ is $\mathfrak{A}_\infty$-measurable, we can conclude that
\begin{align*}
|g(x)-\langle g\rangle_{\ci{I_m}}| 
& =\frac{1}{|I(x)|}\int_{I(x)}|g(x)-\langle g\rangle_{\ci{I_m}}|\\
& =\lim_{k\rightarrow\infty}\frac{1}{|I_k(x)|}\int_{I_k(x)}|g(x)-\langle g\rangle_{\ci{I_m}}|\leq2||g||\ti{BMO}.
\end{align*}
\item $\sum_j|I_j^{(1)}|\leq\frac{1}{2}|I_m|.$
\begin{align*}
\sum_j|I_j^{(1)}|
& \leq\sum_j\frac{1}{2||g||\ti{BMO}}\int_{I_j^{(1)}}|g(x)-\langle g\rangle_{\ci{I_m}}|\\
& \leq\frac{1}{2||g||\ti{BMO}}\int_{I_m}|g(x)-\langle g\rangle_{\ci{I_m}}|\leq\frac{1}{2}|I_m|.
\end{align*}
\end{itemize}
\par Next, let us do the Calder\'{o}n-Zygmund decomposition over each $I_j^{(1)}$ to the function $|g(x)-\langle g\rangle_{I_j^{(1)}}|$ at again height $\lambda=2||g||\ti{BMO}$ with respect to the lattice $\mathcal{L}$, from this we obtain
\par A collection of disjoint intervals $\{I_k^{(2)}\}\subseteq\mathcal{L}$ contained in $I_j^{(1)}$, such that over each $I_k^{(2)}$,
$$\frac{1}{|I_k^{(2)}|}\int_{I_k^{(2)}}|g(x)-\langle g\rangle_{\ci{I_j^{(1)}}}|>2||g||\ti{BMO}.$$
Similar argument yields
\begin{itemize}
\item $|g(x)-\langle g\rangle_{\ci{I_j^{(1)}}}|\leq2||g||\ti{BMO}$ a.e. over $I_j^{(1)}\setminus\bigcup_kI_k^{(2)}$.
\item $\sum_k|I_k^{(2)}|\leq(\frac{1}{2})|I_j^{(1)}|$.\\
Moreover, if we sum over all intervals $\{I_j^{(1)}\}$, we will have\\
$\sum_k|I_k^{(2)}|\leq\frac{1}{2}\sum_j|I_j^{(1)}|\leq(\frac{1}{2})^2|I_m|$.
\end{itemize}
\par Continue this process indefinitely. At stage $n$ we get disjoint intervals $\{I_k^{(n)}\}\subseteq\mathcal{L}$, such that over each $I_k^{(n)}$,\\
\begin{itemize}
\item $|g(x)-\langle g\rangle_{\ci{I_j^{(n-1)}}}|\leq2||g||\ti{BMO}$ a.e. over $I_j^{(n-1)}\setminus\bigcup_kI_k^{(n)}$.
\item $\sum_k|I_k^{(n)}|\leq(\frac{1}{2})^n|I_m|$.
\end{itemize}
\par Now take $\mathcal{G}_m=\bigcup_{j, n}\{I_j^{(n)}\}\cup\{I_m\}$, and let $\mathcal{G}=\bigcup_m\mathcal{G}_m\subseteq\mathcal{L}$, define $\{a_{\ci{I}}\}_{I\in\mathcal{L}}$ to be
$$a_{\ci{I_k^{(n)}}}=|I_k^{(n)}|(\langle g\rangle_{\ci{I_k^{(n)}}}-\langle g\rangle_{\ci{I_j^{(n-1)}}}),$$
and for $I\notin\mathcal{G}$, simply set
$$a_{\ci{I}}=0.$$
\par Finally, define the function $\phi(x)=g(x)-\sum_{I\in\mathcal{L}}\frac{1}{|I|}\mathbf{1}_{\ci{I}}a_{\ci{I}}$.\\
\\ \emph{Claim 1}: $||\phi||_\infty\leq2||g||\ti{BMO}$.\\
\par If there exists $x\in I_m, I_m\in\mathcal{L}_0$, such that at all stages $n\geq 1$, one can find an interval $I_k^{(n)}$ containing $x$, let $I(x)\in\mathfrak{A}_\infty$ be the intersection of all those intervals, then by construction
$$|\phi(x)|=|g(x)-\sum_{I\in\mathcal{L}}\frac{1}{|I|}\mathbf{1}_{\ci{I}}a_{\ci{I}}|
=|g(x)-\lim_{n\rightarrow\infty}\langle g\rangle_{\ci{I_k^{(n)}}}|=|g(x)-\langle g\rangle_{\ci{I(x)}}|=0.$$
Otherwise, for arbitrary $I_m\in\mathcal{L}_0$, we have
$$|\phi(x)|=|g(x)-\sum_{I\in\mathcal{L}}\frac{1}{|I|}\mathbf{1}_{\ci{I}}a_{\ci{I}}|=|g(x)-\langle g\rangle_{\ci{I_j^{(n-1)}}}|\leq 2||g||\ti{BMO}$$
almost everywhere over $I_j^{(n-1)}\setminus\bigcup_kI_k^{(n)}$ for all stages $n\geq 1$, therefore $||\phi||_\infty\leq2||g||\ti{BMO}$.\\
\\ \emph{Claim 2}: $\textup{Carl}(|a_I|)\leq3||g||\ti{BMO}$.\\
\par Fix an interval $J\in\mathcal{L}\bigcup\mathfrak{A}_{-\infty}^{0,\textup{fin}}$, we have
$$\frac{1}{|J|}\sum_{I\in\mathcal{L},I\subseteq J}|a_{\ci{I}}|=\frac{1}{|J|}\sum_{I_k^{(n)}\subseteq J}|I_k^{(n)}||\langle g\rangle_{\ci{I_k^{(n)}}}-\langle g\rangle_{\ci{I_j^{(n-1)}}}|.$$
\par Note that $g\in BMO$ gives $||\Delta_{\ci{I}}g||_\infty\leq||g||\ti{BMO}$, for all $I\in\mathcal{L}$, so 
\begin{align*}
|\langle g\rangle_{\ci{I_k^{(n)}}}-\langle g\rangle_{\ci{I_j^{(n-1)}}}|
& \leq|\langle g\rangle_{\ci{I_k^{(n)}}}-\langle g\rangle_{\ci{\widetilde{I_k^{(n)}}}}|+|\langle g\rangle_{\ci{\widetilde{I_k^{(n)}}}}-\langle g\rangle_{\ci{I_j^{(n-1)}}}|\\
& \leq||\Delta_{\ci{\widetilde{I_k^{(n)}}}}g||_\infty+\frac{1}{|\widetilde{I_k^{(n)}}|}\int_{\widetilde{I_k^{(n)}}}|g(x)-\langle g\rangle_{\ci{I_j^{(n-1)}}}|\\
& \leq||g||\ti{BMO}+2||g||\ti{BMO}=3||g||\ti{BMO},
\end{align*}
where $\widetilde{I_k^{(n)}}$ is the parent of $I_k^{(n)}$.
\par To estimate $\frac{1}{|J|}\sum_{I_k^{(n)}\subseteq J}|I_k^{(n)}|$, by our construction, it suffices to assume $J$ being contained in some $I_m\in\mathcal{L}_0$, and
$$\frac{1}{|J|}\sum_{I_k^{(n)}\subseteq J}|I_k^{(n)}|=\frac{1}{|J|}\sum_{n\geq 1}\sum_k|I_k^{(n)}|\leq\frac{1}{|J|}\sum_{n\geq 1}(\frac{1}{2})^n|J|=1.$$
\par Hence, we obtain $\frac{1}{|J|}\sum_{I\in\mathcal{L},I\subseteq J}|a_I|\leq 3||g||\ti{BMO}$, for all  $J\in\mathcal{L}\bigcup\mathfrak{A}_{-\infty}^{0,\textup{fin}}$, which completes the proof. 
\end{enumerate}
\end{proof}

\section{Proof of the main theorem}
We are now ready to prove \textbf{Theorem \ref{thm1}} using \textbf{Theorem \ref{thm2}}.
\begin{proof}
\begin{enumerate} 
\item $||Mf||_1\lesssim||Sf||_1$ or $H_*^1\supseteq H_S^1$
\par To prove this half of the theorem, we need the following celebrated Fefferman's inequality \cite{CF} whose proof will be postponed to the next section: 
\begin{theorem}
For $f\in H_S^1$ and $g\in BMO$, we have 
\begin{align}
\int fg\leq 2||f||_{H_S^1}||g||\ti{BMO}.\label{eq5}
\end{align}\label{thm3}
\end{theorem}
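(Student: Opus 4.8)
The plan is to read off the pairing from the orthogonal martingale expansion and to feed the $\mathrm{BMO}$ hypothesis in through its Carleson description (\ref{eq2}). First I would expand $f$ and $g$ by (\ref{eq1}); since the differences $\Delta_{\ci{I}}$ are orthogonal across the lattice, all cross terms drop and
\begin{align*}
\int fg=\sum_{I\in\mathcal{L}}\int\Delta_{\ci{I}}f\,\Delta_{\ci{I}}g+\sum_{I\in\mathfrak{A}_{-\infty}^{0,\textup{fin}}}\int\mathbb{E}_{\ci{I}}f\,\mathbb{E}_{\ci{I}}g .
\end{align*}
Indeed, for nested $I\subsetneq I'$ the coarser difference is constant on the child of $I'$ that contains $I$, over which the finer difference has integral zero, and disjoint intervals contribute nothing. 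The two inputs I would then isolate are: (a) the pointwise bound $\sum_{I\ni x}|\Delta_{\ci{I}}f(x)|^2\le Sf(x)^2$, directly from the definition of $Sf$; and (b) by (\ref{eq2}) the sequence $b_{\ci{I}}:=\int|\Delta_{\ci{I}}g|^2$ is a Carleson sequence with $\textup{Carl}(b_{\ci{I}})\le\|g\|\ti{BMO}^2$. The task is to sum $\sum_{I}|\langle\Delta_{\ci{I}}f,\Delta_{\ci{I}}g\rangle|$ at the $L^1$ level, since a naive Cauchy--Schwarz only gives the useless $\|Sf\|_2\,$-type bound.

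The device I would use is a stopping-time decomposition of the lattice against the level sets of the square function. With $\Omega_k=\{Sf>2^k\}$ and $\widehat\Omega_k=\bigcup\{I\in\mathcal{L}:|I\cap\Omega_k|>\tfrac12|I|\}$, I would sort each $I$ into the layer $k$ with $I\subseteq\widehat\Omega_k$ but $|I\cap\Omega_{k+1}|\le\tfrac12|I|$, so that $Sf\le 2^{k+1}$ on at least half of $I$. On each layer, Cauchy--Schwarz gives
\begin{align*}
\sum_{I\text{ in layer }k}|\langle\Delta_{\ci{I}}f,\Delta_{\ci{I}}g\rangle|\le\Big(\sum_{I\text{ in layer }k}\int_I|\Delta_{\ci{I}}f|^2\Big)^{1/2}\Big(\sum_{I\subseteq\widehat\Omega_k}b_{\ci{I}}\Big)^{1/2}.
\end{align*}
The second factor is $\le\|g\|\ti{BMO}\,|\widehat\Omega_k|^{1/2}$ by summing the Carleson bound (b) over the maximal lattice intervals whose union is $\widehat\Omega_k$; the first factor I would bound by $\lesssim 2^{k}|\widehat\Omega_k|^{1/2}$, using that on the majority of each $I$ one has $Sf\le 2^{k+1}$, so the partial square sum is comparable to $Sf^2\le 2^{2(k+1)}$ there.

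Summing over layers yields $\|g\|\ti{BMO}\sum_k 2^k|\widehat\Omega_k|$, and I would finish with the weak-type bound $|\widehat\Omega_k|\le 2|\Omega_k|$ for the (dyadic-type) maximal function together with the layer-cake estimate $\sum_k 2^k|\Omega_k|\approx\int Sf=\|Sf\|_1$; the bottom sum over $\mathfrak{A}_{-\infty}^{0,\textup{fin}}$ is handled identically by treating $\mathbb{E}_{\ci{I}}f$ as the coefficients of the bottom generation. The main obstacle is exactly the \emph{non-homogeneity} of the lattice: when an interval $I$ has many children, $|\Delta_{\ci{I}}f|$ is no longer constant on $I$, so ``$Sf$ is small on half of $I$'' does not by itself control $\int_I|\Delta_{\ci{I}}f|^2$, and the first-factor estimate above must instead be organized along the stopping generations rather than interval by interval. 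To sidestep this and to reach the sharp constant, I would alternatively route the argument through the atomic decomposition of $H_S^1$ (built by the same stopping time on $Sf$): writing $f=\sum_j\lambda_j a_j$ with $\sum_j|\lambda_j|\lesssim\|Sf\|_1$ and each atom $a_j$ supported on some $Q_j\in\mathcal{L}$ with $\int a_j=0$ and $\|a_j\|_2\le|Q_j|^{-1/2}$, one gets $\int a_j g=\int_{Q_j}a_j\,(g-\langle g\rangle_{\ci{Q_j}})\le\|a_j\|_2\,\big(\|g\|\ti{BMO}|Q_j|^{1/2}\big)\le\|g\|\ti{BMO}$ by (\ref{eq2}), whence $\int fg\le\|g\|\ti{BMO}\sum_j|\lambda_j|$, delivering the clean constant at the cost of first establishing the decomposition.
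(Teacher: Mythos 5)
Your opening steps are sound and coincide with the paper's: the orthogonal expansion of $\int fg$ into $\sum_{I}\int\Delta_{\ci{I}}f\,\Delta_{\ci{I}}g$ (plus bottom terms) and the observation that $b_{\ci{I}}=\int|\Delta_{\ci{I}}g|^2$ is a Carleson sequence with constant $\|g\|\ti{BMO}^2$ by (\ref{eq2}). But your first route founders on exactly the gap you flag yourself, and flagging it does not repair it: in the layer-$k$ bound $\sum_{I\,\text{in layer}\,k}\int_I|\Delta_{\ci{I}}f|^2\lesssim 2^{2k}|\widehat\Omega_k|$, the integral runs over all of $I$, including $I\cap\Omega_{k+1}$, where you have no pointwise information; knowing $Sf\le 2^{k+1}$ on half of $I$ controls $|\Delta_{\ci{I}}f|$ only at those points. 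In a homogeneous lattice this is rescued because $\Delta_{\ci{I}}f$ is constant on each of boundedly many children of size comparable to $I$, so a value witnessed on the good portion propagates to the whole child; with arbitrarily many children of wildly disparate sizes (the situation this paper is written for) that propagation fails, and the promised reorganization ``along the stopping generations'' is never actually carried out. As written, route one is not a proof.

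The atomic-decomposition fallback does not close this gap; it relocates it. The atomic decomposition of $H_S^1$ is itself a theorem of comparable depth, and its standard construction by stopping on the square function meets the same non-homogeneity wall: $\Omega_k=\{Sf>2^k\}$ is not a union of lattice intervals (since $Sf$ is not adapted to any $\mathfrak{A}_n$), the predictable rule $\inf\{n:S_{n+1}f>2^k\}$ is not a stopping time because $S_{n+1}f$ is not $\mathfrak{A}_n$-measurable, and the adapted rule $\inf\{n:S_nf>2^k\}$ leaves the stopped square function uncontrolled through the jump at the stopping time --- a jump that, without homogeneity, cannot be absorbed. This is why in the martingale literature the atomic decomposition for the (unconditional) square-function space is proved under regularity, i.e.\ homogeneity, hypotheses. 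Moreover, even granting the decomposition, your constant claim is wrong: $\sum_j|\lambda_j|\le C\|Sf\|_1$ with $C>1$ (typically $4$ or worse), so you obtain $C\|f\|_{H_S^1}\|g\|\ti{BMO}$, not the stated $2\|f\|_{H_S^1}\|g\|\ti{BMO}$. The paper avoids all of this with Herz's device, which is completely insensitive to homogeneity: write $\Delta_kf\,\Delta_kg=\bigl(\Delta_kf/\sqrt{S_kf}\bigr)\bigl(\Delta_kg\sqrt{S_kf}\bigr)$, apply Cauchy--Schwarz, and bound the two factors by telescoping, $\sum_{k}\int|\Delta_kf|^2/S_kf\le 2\int Sf$, and by Abel summation plus Definition \ref{Def BMO}, $\sum_{k}\int|\Delta_kg|^2S_kf\le 2\|g\|\ti{BMO}^2\int Sf$, which multiplies out to exactly the constant $2$. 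I would recommend abandoning the level-set scheme for this statement and learning that argument.
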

Now, define the 'cut-off' of $Mf(x)$ by
$$M_nf(x)=\max_{x\in I, I\in\bigcup_{k=-n}^n\mathcal{L}_k}|\langle f\rangle_{\ci{I}}|,$$ 
so obviously, $M_nf$ increases to $Mf$ pointwisely. Moreover, for any $J\in\mathcal{L}_n$, we have $M_nf|_{\ci{J}}$ must be constant. In addition, define
$$I(J)=\{I\supseteq J, I~\textup{is maximal in} \bigcup_{k=-n}^n\mathcal{L}_k: M_nf|_{\ci{J}}=|\langle f\rangle_{\ci{I}}|\},$$
and let $a_{\ci{I(J)}}$ be such that $a_{\ci{I(J)}}\langle f\rangle_{\ci{I(J)}}=\int_JM_nf$, which implies $|a_{\ci{I(J)}}|=|J|$. Finally define
$$a_{\ci{I}}=\sum_{\{J:I(J)=I\}}a_{\ci{I(J)}},$$
therefore, we have
$$\langle f\rangle_{\ci{I}}a_{\ci{I}} =\langle f\rangle_{\ci{I}}\left[\sum_{\{J:I(J)=I\}}a_{\ci{I(J)}}\right]
= \sum_{\{J:I(J)=I\}}\langle f\rangle_{\ci{I(J)}}a_{\ci{I(J)}}
= \sum_{\{J:I(J)=I\}}\int_JM_nf.$$
\par Summing over all intervals $I\in\bigcup_{k=-n}^n\mathcal{L}_k$, we obtain
$$\sum_{I\in\bigcup_{k=-n}^n\mathcal{L}_k}\langle f\rangle_{\ci{I}}a_{\ci{I}}=\sum_{I\in\bigcup_{k=-n}^n\mathcal{L}_k}\sum_{\{J:I(J)=I\}}\int_JM_nf=\int M_nf.$$
\par Having this, we can write
$$\int M_nf=\sum_{I\in\bigcup_{k=-n}^n\mathcal{L}_k}\langle f\rangle_{\ci{I}}a_{\ci{I}}=\int f\sum_{I\in\bigcup_{k=-n}^n\mathcal{L}_k}\frac{1}{|I|}\mathbf{1}_{\ci{I}}a_{\ci{I}}=\int fg,$$
where $g(x)=\sum_{I\in\bigcup_{k=-n}^n\mathcal{L}_k}\frac{1}{|I|}\mathbf{1}_{\ci{I}}a_{\ci{I}}$.\\
\\ \emph{Claim}: $||g||\ti{BMO}\leq 2.$\\
\par Let us first consider $\textup{Carl}(|a_{\ci{I}}|)$ for the sequence $\{|a_{\ci{I}}|\}$, such that $I\in\bigcup_{k=-n}^n\mathcal{L}_k$. Take any $K\in\bigcup_{k=-n}^n\mathcal{L}_k$, we have
$$\sum_{I\subseteq K}|a_{\ci{I}}| = \sum_{I\subseteq K}|\sum_{\{J:I(J)=I\}}a_{\ci{I(J)}}|
\leq \sum_{J\in\mathcal{L}_n,J\subseteq K}|a_{\ci{I(J)}}|
= \sum_{J\in\mathcal{L}_n,J\subseteq K}|J|\leq|K|, $$
so $\textup{Carl}(|a_I|)\leq 1$, and by (\ref{eq3}), we know $||g||\ti{BMO}\leq2$.
\par To complete our proof, by (\ref{eq5}), we can conclude
$$\int M_nf=\int fg\leq 2||f||_{H_S^1}||g||\ti{BMO}\leq 4||f||_{H_S^1}.$$
Letting $n\rightarrow\infty$, by Monotone Convergence Theorem, we obtain 
$$||Mf||_1\leq 4||Sf||_1.$$

\item $||Mf||_1\gtrsim ||Sf||_1$ or $H_*^1\subseteq H_S^1$
\par To prove this half of the theorem, we need another auxilary lemma which will also be proved in the next section:
\begin{lemma}
For $f\in H_*^1$ and a Carleson sequence $\{|a_{\ci{I}}|\}_{\ci{I\in\mathcal{L}}}$, we have
\begin{align}
|\sum_{I\in\mathcal{L}}\langle f\rangle_{\ci{I}}a_{\ci{I}}|\leq ||f||_{H_*^1}\textup{Carl}(|a_{\ci{I}}|).\label{eq6}
\end{align}\label{L1}
\end{lemma}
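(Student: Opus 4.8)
The plan is to discard the signs, reduce to bounding the nonnegative sum $\sum_{I}|\langle f\rangle_{\ci{I}}|\,|a_{\ci{I}}|$, and then exploit the elementary pointwise domination coming straight from the definition of $Mf$: if $x\in I$, then $I$ is one of the intervals over which the supremum defining $Mf(x)$ is taken, so $|\langle f\rangle_{\ci{I}}|\le Mf(x)$ for every $x\in I$. In particular, $|\langle f\rangle_{\ci{I}}|>\lambda$ forces $I\subseteq\{Mf>\lambda\}=:E_\lambda$. First I would write each coefficient via the layer-cake identity $|\langle f\rangle_{\ci{I}}|=\int_0^\infty\mathbf{1}_{\{|\langle f\rangle_{\ci{I}}|>\lambda\}}\,d\lambda$ and interchange summation and integration (legitimate by Tonelli, all terms being nonnegative) to obtain
$$\sum_{I\in\mathcal{L}}|\langle f\rangle_{\ci{I}}|\,|a_{\ci{I}}|=\int_0^\infty\Big(\sum_{I:\,|\langle f\rangle_{\ci{I}}|>\lambda}|a_{\ci{I}}|\Big)\,d\lambda.$$

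The core step is to control the inner sum for each fixed $\lambda$. Since $\{x:Mf(x)>\lambda\}=\bigcup\{I\in\mathcal{L}:|\langle f\rangle_{\ci{I}}|>\lambda\}$, I would organise the relevant intervals under the maximal intervals $\{I_\lambda^{(m)}\}$ on which the average exceeds $\lambda$: these are pairwise disjoint, lie in $\mathcal{L}\cup\mathfrak{A}_{-\infty}^{0,\textup{fin}}$, and their union is exactly $E_\lambda$. Every $I$ with $|\langle f\rangle_{\ci{I}}|>\lambda$ is contained in a unique $I_\lambda^{(m)}$, so applying the Carleson condition to each $I_\lambda^{(m)}$ gives
$$\sum_{I:\,|\langle f\rangle_{\ci{I}}|>\lambda}|a_{\ci{I}}|\le\sum_m\sum_{I\subseteq I_\lambda^{(m)}}|a_{\ci{I}}|\le\textup{Carl}(|a_{\ci{I}}|)\sum_m|I_\lambda^{(m)}|=\textup{Carl}(|a_{\ci{I}}|)\,|E_\lambda|.$$
Integrating in $\lambda$ and recognising $\int_0^\infty|E_\lambda|\,d\lambda=\int_0^\infty|\{Mf>\lambda\}|\,d\lambda=\|Mf\|_1=\|f\|_{H_*^1}$ would then finish the proof.

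The main obstacle is the geometric step: I must be sure the maximal intervals actually exist in $\mathcal{L}\cup\mathfrak{A}_{-\infty}^{0,\textup{fin}}$ and genuinely exhaust $E_\lambda$. The threat is an infinite ascending chain $I_k(x)$ with $|\langle f\rangle_{\ci{I_k(x)}}|>\lambda$ at every scale and $|I_k(x)|\to\infty$, leaving no coarsest interval. I would rule this out by first establishing $|f|\le Mf$ almost everywhere: at the finest scales $\langle f\rangle_{\ci{I_k(x)}}\to f(x)$ (as in the proof of the Proposition), while $|\langle f\rangle_{\ci{I_k(x)}}|\le Mf(x)$ at every scale. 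Hence $f\in L^1$ with $\|f\|_1\le\|f\|_{H_*^1}<\infty$, and along any chain with $|I_k(x)|\to\infty$ one has $|\langle f\rangle_{\ci{I_k(x)}}|\le\|f\|_1/|I_k(x)|\to0$. Thus the level $\lambda>0$ cannot be exceeded at arbitrarily coarse scales, so a maximal interval always exists — lying in $\mathcal{L}$ in general, and in $\mathfrak{A}_{-\infty}^{0,\textup{fin}}$ in the degenerate case where the chain stabilises to a finite-measure set (which is why the Carleson condition is imposed over $\mathcal{L}\cup\mathfrak{A}_{-\infty}^{0,\textup{fin}}$). With this structural fact in hand, the remaining estimates are routine.
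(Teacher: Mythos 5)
Your proof is correct, and its core coincides with the paper's: both arguments hinge on the facts that every lattice interval with $|\langle f\rangle_I|>\lambda$ lies inside $E_\lambda=\{Mf>\lambda\}$, that $E_\lambda$ is an (at most countable) disjoint union of maximal intervals belonging to $\mathcal{L}\cup\mathfrak{A}_{-\infty}^{0,\textup{fin}}$, so the Carleson condition yields $\sum_{I:\,|\langle f\rangle_I|>\lambda}|a_I|\le\textup{Carl}(|a_I|)\,|E_\lambda|$, and on the layer-cake representation of $\|Mf\|_1$. Where you genuinely diverge is in the bookkeeping: the paper discretizes the levels at heights $(1+\varepsilon)^k$, splits the families $\mathcal{E}_k$ into shells $\mathcal{E}_{k+l}\setminus\mathcal{E}_{k+l+1}$, resums a geometric series, and recovers the constant $1$ only by letting $\varepsilon\to 0$ at the end; you instead write $|\langle f\rangle_I|=\int_0^\infty\mathbf{1}_{\{|\langle f\rangle_I|>\lambda\}}\,d\lambda$ and swap sum and integral by Tonelli, so that one application of the Carleson bound at each level $\lambda$, integrated against $d\lambda$, finishes the proof with the sharp constant and no limiting argument. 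Your route also makes explicit a point the paper merely asserts (``Note that $E_k$ is a finite disjoint union of maximal intervals in $\mathcal{E}_k$''): you justify the existence of maximal intervals by observing that $|f|\le Mf$ a.e.\ gives $f\in L^1$ with $\|f\|_1\le\|f\|_{H_*^1}$, hence $|\langle f\rangle_I|>\lambda$ forces $|I|\le\|f\|_1/\lambda$, which rules out ascending chains of unbounded length in $\mathcal{L}$, while infinite chains of bounded measure have unions lying in $\mathfrak{A}_{-\infty}^{0,\textup{fin}}$ --- precisely the reason the Carleson condition is posed over $\mathcal{L}\cup\mathfrak{A}_{-\infty}^{0,\textup{fin}}$ rather than over $\mathcal{L}$ alone. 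What the paper's discretization buys is that it manipulates only countable rearrangements of sums; what your version buys is brevity, the sharp constant without a limit, and a complete justification of the geometric step on which both proofs rest.
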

By (\ref{eq4}) and (\ref{eq6}), we can conclude that fix a function $f\in H_*^1$, and for any $g\in BMO$, we have
\begin{align*}
|\int fg| &= |\int f(\phi+\sum_{I\in\mathcal{L}}\frac{1}{|I|}\mathbf{1}_{\ci{I}}a_{\ci{I}})|\leq |\int f\phi|+|\int f\sum_{I\in\mathcal{L}}\frac{1}{|I|}\mathbf{1}_{\ci{I}}a_{\ci{I}}|\\
&\leq |\int Mf\phi|+|\sum_{I\in\mathcal{L}}\langle f\rangle_{\ci{I}}a_{\ci{I}}|\leq ||Mf||_1||\phi||_\infty+||f||_{H_*^1}\textup{Carl}(a_{\ci{I}})\\
&\leq ||f||_{H_*^1}2||g||\ti{BMO}+||f||_{H_*^1}3||g||\ti{BMO}=5||f||_{H_*^1}||g||\ti{BMO}.
\end{align*}
\emph{Claim}: The above estimation implies $||Sf||_1\lesssim||Mf||_1$.\\
\par It suffices to prove this claim on a dense set of  funtions $f(x)$ for which the corresponding sequence $\{\Delta_{\ci{I}}f\}_{\ci{I\in\mathcal{L}}}$ and $\{\mathbb{E}_{\ci{I}}f\}_{\ci{I\in\mathfrak{A}_{-\infty}^{0,\textup{fin}}}}$ have only finitely many non-zero terms.
\par Let us construct a function $g\in BMO$ from the function $f$, such that $\int Sf=\int fg$.
\par First take $g_{\ci{I}}=\Delta_{\ci{I}}f/Sf$ for $I\in\mathcal{L}$, and take $g_{\ci{I}}=\mathbb{E}_{\ci{I}}f/Sf$ for $I\in\mathfrak{A}_{-\infty}^{0,\textup{fin}}$, thus $\sum_{I\in\mathcal{L}}|g_{\ci{I}}|^2+\sum_{I\in\mathfrak{A}_{-\infty}^{0,\textup{fin}}}|g_{\ci{I}}|^2=1$, and $\sum_{I\in\mathcal{L}}\Delta_{\ci{I}}fg_{\ci{I}}+\sum_{I\in\mathfrak{A}_{-\infty}^{0,\textup{fin}}}\mathbb{E}_{\ci{I}}fg_{\ci{I}}=Sf$. One can also check easily that
\begin{align*}
\int Sf
& =\sum_{I\in\mathcal{L}}\int\Delta_{\ci{I}}fg_{\ci{I}}+\sum_{I\in\mathfrak{A}_{-\infty}^{0,\textup{fin}}}\int\mathbb{E}_{\ci{I}}fg_{\ci{I}}\\
& =\sum_{I\in\mathcal{L}}\int\Delta_{\ci{I}}f\Delta_{\ci{I}}g_{\ci{I}}+\sum_{I\in\mathfrak{A}_{-\infty}^{0,\textup{fin}}}\int\mathbb{E}_{\ci{I}}f\mathbb{E}_{\ci{I}}g_{\ci{I}}.
\end{align*}
\par Consider now $g=\sum_{I\in\mathcal{L}}\Delta_{\ci{I}}g_{\ci{I}}+\sum_{I\in\mathfrak{A}_{-\infty}^{0,\textup{fin}}}\mathbb{E}_{\ci{I}}g_{\ci{I}}$, since $\{\Delta_{\ci{I}}f\}_{\ci{I\in\mathcal{L}}}$ and $\{\mathbb{E}_{\ci{I}}f\}_{\ci{I\in\mathfrak{A}_{-\infty}^{0,\textup{fin}}}}$ have only finitely many non-zero terms, this is actually a finite sum which is obviously well-defined. Moreover, it is easy to compute that $\Delta_{\ci{I}}g=\Delta_{\ci{I}}g_{\ci{I}}$ for all $I\in\mathcal{L}$, and $\mathbb{E}_{\ci{I}}g=\mathbb{E}_{\ci{I}}g_{\ci{I}}$ for all $\mathfrak{A}_{-\infty}^{0,\textup{fin}}$. Let us show by definition that $g\in BMO$.
\par For any interval $I\in\mathcal{L}$, $|\Delta_{\ci{I}}g|=|\Delta_{\ci{I}}g_{\ci{I}}|\leq2g_{\ci{I}}^*$ where $g_{\ci{I}}^*$ is the Hardy-Littlewood maximal function of $g_{\ci{I}}$. Note that $||g_{\ci{I}}||_\infty\leq1$ for all $I\in\mathcal{L}$, thus we always have $g_{\ci{I}}^*\leq1$ and so $|\Delta_{\ci{I}}g|\leq2$.
\par For any interval $I\in\mathcal{L}$, by Hardy-Littlewood Maximal Inequality, we can estimate
$$\int_I\sum_{J\subseteq I}|\Delta_{\ci{J}}g|^2 
= \int_I\sum_{J\subseteq I}|\Delta_{\ci{J}}g_{\ci{J}}|^2\\ 
\leq 4\int_I\sum_{J\subseteq I}|g_{\ci{I}}^*|^2\\
\leq C\int_I\sum_{J\subseteq I}|g_{\ci{I}}|^2\\
\leq C|I|.$$
\par Hence, we find a funtion $g\in BMO$ such that
\begin{align*}
\int Sf
&= \sum_{I\in\mathcal{L}}\int\Delta_{\ci{I}}f\Delta_{\ci{I}}g_{\ci{I}}+\sum_{I\in\mathfrak{A}_{-\infty}^{0,\textup{fin}}}\mathbb{E}_{\ci{I}}f\mathbb{E}_{\ci{I}}g_{\ci{I}}\\
&= \sum_{I\in\mathcal{L}}\int\Delta_{\ci{I}}f\Delta_{\ci{I}}g+\sum_{I\in\mathfrak{A}_{-\infty}^{0,\textup{fin}}}\mathbb{E}_{\ci{I}}f\mathbb{E}_{\ci{I}}g\\
&=\int(\sum_{I\in\mathcal{L}}\Delta_{\ci{I}}f+\sum_{I\in\mathfrak{A}_{-\infty}^{0,\textup{fin}}}\mathbb{E}_{\ci{I}}f)(\sum_{I\in\mathcal{L}}\Delta_{\ci{I}}g+\sum_{I\in\mathfrak{A}_{-\infty}^{0,\textup{fin}}}\mathbb{E}_{\ci{I}}g)=\int fg.
\end{align*}
Finally, we can conclude
$$||f||_{H_S^1}=\int Sf=\int fg\leq  5||f||_{H_*^1}||g||\ti{BMO}\lesssim ||f||_{H_*^1}.$$
\end{enumerate}
\end{proof}

\section{Proof of the auxilary lemmas}
In this section, we will prove \textbf{Theorem \ref{thm3}} and \textbf{Lemma \ref{L1}}, and hence complete the whole proof.
\par To show Fefferman's inequality, we take the elegant proof from \cite{AMG1}.
\begin{proof} \textbf{Theorem \ref{thm3}}
\par For $f\in H_S^1$ and $g\in BMO$, we can assume by Remark \ref{rmk} that $\mathbb{E}_{\ci{I}}g=0$, for all $I\in\mathfrak{A}_{-\infty}^0$, and so we can compute
$$|\int fg|=|\int(\sum_{I\in\mathcal{L}}\Delta_{\ci{I}}f)(\sum_{I\in\mathcal{L}}\Delta_{\ci{I}}g)|=|\sum_{I\in\mathcal{L}}\int\Delta_{\ci{I}}f\Delta_{\ci{I}}g|=|\sum_{k\in\mathbb{Z}}\int\Delta_kf\Delta_kg|.$$
\par Now, let us define $S_nf(x)=\left[\sum_{k\leq n}|\Delta_kf|^2\right]^{\frac{1}{2}}=\left[\sum_{\textup{rk}(I)\leq n-1}|\Delta_{\ci{I}}f|^2\right]^{\frac{1}{2}}$, then $0\leq S_nf(x)\leq S_{n+1}f(x)\leq Sf(x)$ holds for all $x$. A very clever idea due to C. Herz suggests that
$$|\sum_{k\leq n}\int\Delta_kf\Delta_kg|=|\sum_{k\leq n}\int\frac{\Delta_kf}{\sqrt{S_kf}}\Delta_kg\sqrt{S_kf}|\leq\left( \sum_{k\leq n}\int\frac{|\Delta_kf|^2}{S_kf}\right)^{\frac{1}{2}}\left(\sum_{k\leq n}\int|\Delta_kg|^2S_kf\right)^{\frac{1}{2}}.$$
\par Therefore, all boil down to the following simple estimations
$$\sum_{k\leq n}\int\frac{|\Delta_kf|^2}{S_kf}=\sum_{k\leq n}\int\frac{S_k^2f-S_{k-1}^2f}{S_kf}\leq 2\sum_{k\leq n}\int (S_kf-S_{k-1}f)\leq 2\int Sf,$$
and
$$\sum_{k\leq n}\int|\Delta_kg|^2S_kf=\sum_{k\leq n}\int(\sum_{l=k}^n|\Delta_lg|^2)(S_kf-S_{k-1}f),$$
note that for every $k$, $S_kf-S_{k-1}f$ is a constant on children of intervals $I$ such that $\textup{rk}(I)=k-1$ and equals 0 outside intervals $I$, thus for an interval $J\in\textup{child}(I)$, we have
\begin{align*}
\int_J(\sum_{l=k}^n|\Delta_lg|^2)(S_kf-S_{k-1}f)
& =\int_J\left[\frac{1}{|J|}\int_J\sum_{l=k}^n|\Delta_lg|^2\right](S_kf-S_{k-1}f)\\
& \leq\int_J\left[\frac{1}{|J|}\int_J|\Delta_{\ci{I}}g|^2+\sum_{K\in\mathcal{L},K\subseteq J}|\Delta_{\ci{K}}g|^2\right](S_kf-S_{k-1}f)\\
& \leq 2||g||_{\ti{BMO}}^2\int_J(S_kf-S_{k-1}f). ~~~~~(\textup{by Definition \ref{Def BMO}})
\end{align*}
This given, we can estimate
$$\sum_{k\leq n}\int(\sum_{l=k}^n|\Delta_lg|^2)(S_kf-S_{k-1}f)\leq 2||g||_{\ti{BMO}}^2\sum_{k\leq n}\int(S_kf-S_{k-1}f)\leq 2||g||_{\ti{BMO}}^2\int Sf.$$
\end{proof}

To show the second auxilary lemma, we modify the proof using level sets comparision in \cite{ST} to get this sharper inequality.
\begin{proof} \textbf{Lemma \ref{L1}}
\par Define $E_k=\{x\in\mathbb{R}:Mf(x)>(1+\varepsilon)^k\}$, and take $\mathcal{E}_k=\{I\in\mathcal{L}:I\in E_k\}$. Note that $E_k$ is a finite disjoint union of maximal intervals in $\mathcal{E}_k$, maximal is considered in the sense of inclusion.\\
\\ \emph{Claim 1}: $|\sum_{I\in\mathcal{L}}\langle f\rangle_{\ci{I}}a_{\ci{I}}|\leq\varepsilon\sum_{k\in\mathbb{Z}}(1+\varepsilon)^k|E_k|\textup{Carl}(|a_{\ci{I}}|).$\\
\par First note that $\sum_{I\in\mathcal{E}_k}|a_{\ci{I}}|\leq\sum_{I\subseteq E_k}|a_{\ci{I}}|\leq|E_k|\textup{Carl}(|a_{\ci{I}}|)$, because $\{|a_{\ci{I}}|\}_{\ci{I\in\mathcal{L}}}$ is a Carleson squence. Using this fact, we can estimate
\begin{align*}
\varepsilon\sum_{k\in\mathbb{Z}}(1+\varepsilon)^k|E_k|\textup{Carl}(|a_{\ci{I}}|)
& \geq \varepsilon\sum_{k\in\mathbb{Z}}(1+\varepsilon)^k\sum_{I\in\mathcal{E}_k}|a_{\ci{I}}| = \varepsilon\sum_{k\in\mathbb{Z}}(1+\varepsilon)^k\sum_{l=0}^\infty\sum_{I\in\mathcal{E}_{k+l}\setminus\mathcal{E}_{k+l+1}}|a_{\ci{I}}|\\
& =\sum_{l=0}^\infty\frac{\varepsilon}{(1+\varepsilon)^{l+1}}\sum_{k\in\mathbb{Z}}(1+\varepsilon)^{k+l+1}\sum_{I\in\mathcal{E}_{k+l}\setminus\mathcal{E}_{k+l+1}}|a_{\ci{I}}|\\
& \geq\sum_{l=0}^\infty\frac{\varepsilon}{(1+\varepsilon)^{l+1}}\sum_{k\in\mathbb{Z}}\sum_{I\in\mathcal{E}_{k+l}\setminus\mathcal{E}_{k+l+1}}|\langle f\rangle_{\ci{I}}||a_{\ci{I}}|~~~~~(\textup{by definition of}~ \mathcal{E}_k)\\
&\geq\sum_{l=0}^\infty\frac{\varepsilon}{(1+\varepsilon)^{l+1}}\sum_{I\in\mathcal{L}}|\langle f\rangle_{\ci{I}}a_{\ci{I}}|=|\sum_{I\in\mathcal{L}}\langle f\rangle_{\ci{I}}a_{\ci{I}}|.
\end{align*}
\\ \emph{Claim 2}: $\int Mf\geq \varepsilon\sum_{k\in\mathbb{Z}}(1+\varepsilon)^k|E_{k+1}|.$\\
\par This is done by a commonly used trick
\begin{align*}
\int Mf 
& =\int_0^\infty|\{x:Mf>t\}|dt=\sum_{k\in\mathbb{Z}}\int_{(1+\varepsilon)^k}^{(1+\varepsilon)^{k+1}}|\{x:Mf>t\}|dt\\
& \geq \varepsilon\sum_{k\in\mathbb{Z}}(1+\varepsilon)^k|\{x:Mf>(1+\varepsilon)^{(k+1)}\}|=\varepsilon\sum_{k\in\mathbb{Z}}(1+\varepsilon)^k|E_{k+1}|.
\end{align*}
\par To finish, we combine the two claims and conclude that for any $\varepsilon>0$,
$$|\sum_{I\in\mathcal{L}}\langle f\rangle_{\ci{I}}a_{\ci{I}}|\leq(1+\varepsilon)||f||_{H_*^1}\textup{Carl}(|a_{\ci{I}}|).$$
Finally,  letting $\varepsilon\rightarrow 0$, we prove the desired inequality.
\end{proof}

\section*{Acknowledgement}
The author would like to thank his PhD thesis advisor, Serguei Treil, for suggesting this problem for his topics exam and for the invaluable guidance and support throughout the course of preparation.

\end{document}